\algnewcommand{\Inputs}[1]{%
  \State \textbf{Inputs:}
  \Statex \hspace*{\algorithmicindent}\parbox[t]{.8\linewidth}{\raggedright #1}
}
\algnewcommand{\Initialize}[1]{%
  \State \textbf{Initialization:}
  \Statex \hspace*{\algorithmicindent}\parbox[t]{.8\linewidth}{\raggedright #1}
}
\newtheorem{lemma}{Lemma}
\newcommand{\vl}{{\mathbf{l}}}
\newcommand{\vp}{{\mathbf{p}}}
\newcommand{\vu}{{\mathbf{u}}}
\newcommand{\vv}{{\mathbf{v}}}
\newcommand{\vw}{{\mathbf{w}}}
\newcommand{\cG}{{\mathcal{G}}}
\newcommand{\cL}{{\mathcal{L}}}
\newcommand{\cO}{{\mathcal{O}}}
\newcommand{\cP}{{\mathcal{P}}}
\newcommand{\cS}{{\mathcal{S}}}
\newcommand{\cT}{{\mathcal{T}}}
\begin{document}

\title{Dual first-order methods for efficient computation of convex hull prices}

\author{Sofiane Tanji, Yassine Kamri, François Glineur, Mehdi Madani
	\thanks{S. Tanji and Y. Kamri are with ICTEAM/INMA, Université catholique de Louvain, B-1348 Louvain-la-Neuve, Belgium.}
	\thanks{F. Glineur is with ICTEAM/INMA \& CORE, Université catholique de Louvain, B-1348 Louvain-la-Neuve, Belgium.}
	\thanks{M. Madani is with N-SIDE, B-1348 Louvain-la-Neuve, Belgium.}
}

\maketitle
\IEEEpubidadjcol

\begin{abstract}
    Convex Hull (CH) pricing, used in US electricity markets and raising interest in Europe, is a pricing rule designed to handle markets with non-convexities such as startup costs and minimum up and down times.
    In such markets, the market operator makes side payments to generators to cover lost opportunity costs, and CH prices minimize the total "lost opportunity costs", which include both actual losses and missed profit opportunities.
    These prices can also be obtained by solving a (partial) Lagrangian dual of the original mixed-integer program, where power balance constraints are dualized.
    Computing CH prices then amounts to minimizing a sum of nonsmooth convex objective functions, where each term depends only on a single generator.
    The subgradient of each of those terms can be obtained independently by solving smaller mixed-integer programs.
    
    In this work, we benchmark a large panel of first-order methods to solve the above dual CH pricing problem. 
    We test several dual methods, most of which not previously considered for CH pricing, namely a proximal variant of the bundle level method, subgradient methods with three different stepsize strategies, two recent parameter-free methods and an accelerated gradient method combined with smoothing. 
    We compare those methods on two representative sets of real-world large-scale instances and complement the comparison with a (Dantzig-Wolfe) primal column generation method shown to be efficient at computing CH prices, for reference.
    Our numerical experiments show that the bundle proximal level method and two variants of the subgradient method perform the best among all dual methods and compare favorably with the Dantzig-Wolfe primal method.
\end{abstract}

\begin{IEEEkeywords}
	Convex Hull pricing, Electricity markets, Lagrangian relaxation, Bundle level methods, Subgradient methods, Parameter-free methods, Gradient method with smoothing, Nonsmooth optimization
\end{IEEEkeywords}

\section{Introduction}
\subsection{Pricing in electricity markets}

\paragraph{Electricity markets}
Over the past decades, the global electricity system has shifted from centrally planned, bundled operations to a regulated market-based structure. 
In such markets, a system operator is in charge of balancing generation and demand while meeting the technical constraints of producers and maximizing social welfare.
This scheduling problem is known as \emph{Unit Commitment} (UC) in the power systems literature.
Given a solution to the UC problem, the market prices correspond to the Lagrangian multipliers of the balance constraint.
However, market equilibrium prices exist if and only if there is no duality gap between the primal UC problem and its Lagrangian dual in which balance conditions have been dualized, which is in general not the case in power markets.
Indeed, producers face technical constraints such as startup costs or minimum up and down times that can only be modeled with binary variables, rendering the feasible region of the optimization problem non-convex.
This is a well-known fact, and basic examples illustrating typical market equilibrium impossibilities were given, for example, in \cite{scarf1994allocation}, using an electricity generation context.

\paragraph{Convex Hull pricing} 
\label{par:chp}
Due to the general impossibility of having uniform prices supporting a competitive equilibrium in power systems, pricing schemes in electricity markets often combine a uniform electricity price and side payments (or uplifts) to producers. These side payments are paid when no market equilibrium exists, i.e. when producers face missed profits and lack incentive to stay in the market.

The Convex Hull (CH) pricing approach, first introduced in \cite{ring1995dispatch} and further explored in \cite{gribik2007market}, calculates uniform prices that minimize the producers' lost opportunity costs. Because these side payments represent the deviation from a market equilibrium, CH pricing somehow tries to reflect in the most accurate way the non-convexities in the market prices.
In recent work, short- and long-term economic properties of CH pricing were compared to those of existing pricing rules, showing multiple advantages towards adopting CH pricing, see \cite{BYERS2023351, stevens4632401some}.

It was first observed in \cite{gribik2007market} that computing CH prices is equivalent to solving the Lagrangian dual associated with the main UC problem with dualized balance constraints, or more generally the system-wide constraints. 

\paragraph{Computational hardness of computing CH prices}
As stated above, computing CH prices exactly amounts to solving the partial Lagrangian relaxation of the Unit Commitment problem.
This is in general computationally expensive as it requires maximizing a nonsmooth concave function of the dual prices for which a supgradient can be computed by solving a Mixed Integer Linear Program (MILP) for each generator; see \cref{sub:formulation-ch} for details on this approach.
Approximate convex hull prices are often calculated using the continuous relaxation of the original UC problem. This is done, for instance, by the US ISO PJM, a regional system operator \cite{interconnection2017proposed}. This continuous relaxation yields exact convex hull prices when the convex hull of each generator's feasible set is given by its continuous relaxation.
For example, the Lagrangian multipliers of the balance condition in the continuous relaxation (of the binary variables) of the UC problem are exactly the CH prices when the technical constraints are limited to minimum up/down times and constant start-up (or shut-down) costs, see for example \cite{hua2016convex} and references therein.
In short, in those setups, a linear program of the size of the original UC problem is sufficient to compute the CH prices.
When only ramp conditions are considered, it can be shown that a similar approach can be followed, see \cite{madani2018convex}.
However, when taking into account all standard technical constraints currently in use in most “unit-bidding based markets” (with detailed characteristics on the generation units), simultaneously including minimum up/down times and ramp conditions, the continuous relaxation of the generators' feasible sets is not tight and the continuous relaxation of the primal problem doesn't yield exact CH prices. Computing CH prices in this realistic, more general setting is the subject of the present paper.

\subsection{Computational approaches for CH pricing}
\paragraph{Primal approaches} First consider so-called "primal approaches" where one solves the Lagrangian dual calculating CH prices by solving a convexified primal UC problem where each producer’s feasible set is replaced by its convex hull.
Extended formulations have been leveraged in such approaches, where the convex hulls are described as the projection of higher-dimensional polytopes, see \cite{knueven2022computationally} and \cite{yu2020extended}. 
In \cite{knueven2022computationally}, the large-scale extended formulation is tackled via a Benders decomposition, while in \cite{yu2020extended} extended formulations describing the convex hulls are tackled via an iterative process in which they are dynamically added only for the generators for which they are needed. 
A refined version of this iterative procedure in which only part of the extended formulation for each generator is dynamically added -- corresponding to specific intervals during which units are 'on' -- has been independently derived in \cite{paquetcomparison} as an application of a general Column Generation procedure applied to extended formulations described in \cite{sadykov2013column}.
Finally, \cite{andrianesis2021computation} uses a standard Dantzig-Wolfe decomposition method, which can also be viewed as leveraging extended formulations, where the underlying extended formulations consist in describing the convex hulls using their vertex representation.
\paragraph{Dual approaches}
It is well known that Dantzig-Wolfe decompositions applied to the primal formulation correspond to applying Kelley’s cutting plane algorithm to the Lagrangian dual directly in the dual space of market prices (see \cite{briant2008comparison} for the link between both methods).
However, Kelley's method (and therefore its primal counterparts, the Dantzig-Wolfe decompositions) suffer from instability in the sense that near-optimal iterates can lead to the following iterates being much further away from the optimum, even on simple problems (see e.g. \cite[Example 3.3.1]{nesterovIntroductoryLecturesConvex2004}). This motivated bundle methods which are cutting-plane algorithms with stabilization techniques, see \cite{frangioni2020standard, Lemaréchal1995new} for an extensive discussion and review of the literature on such methods.
In \cite{stevens2022application}, authors solve the CH pricing problem via a Bundle Level Method and prove empirically that this approach is highly competitive on real-world market instances. To the best of our knowledge, the existing literature does not report on other dual methods applied to CH pricing.

\vskip -2\baselineskip plus -1fil
\subsection{Contributions of the paper}
We propose, compare and tune a wide range of dual first-order optimization methods for computing CH prices.
This helps us identify dual methods that outperform existing approaches in the literature.
We offer a self-contained presentation of the decision-making problem at hand and provide economic interpretations whenever possible, hoping to reach a wide audience in both the power systems and optimization communities.
Furthermore, we provide an easy-to-use Julia toolbox, \texttt{ConvexHullPricing.jl}\footnote{Available here: \url{https://github.com/SofianeTanji/ConvexHullPricing.jl}.}, to solve the convex hull pricing problem with a broad spectrum of methods.
The toolbox includes functions for running benchmarks, and also provides helper functions to quickly compute values of interests from the optimization results, making post-optimization tasks simpler for practitioners.

The remainder of the paper is structured as follows. In \cref{sec:formulations}, we review the derivation of primal and dual formulations for computing CH prices. In \cref{sec:first-order-methods}, we introduce and describe a set of eight first-order methods to solve the dual formulation. In \cref{sec:experiments}, we conduct extensive numerical experiments to compare all presented methods on real-world instances. In \cref{sec:conclusion}, we conclude with our recommendations among all presented dual methods.

\section{Notations and CH pricing formulations}
\label{sec:formulations}
In electricity markets, the market operator ensures that total generation meets the demand under a market clearing constraint over a given time horizon, while each participant aims to minimize their costs.
This decision-making problem, the Unit Commitment (UC) problem, is described in \cref{sub:unit-commitment}.
In \cref{sub:formulation-ch}, we present the CH pricing problem as a Lagrangian dual problem of the UC problem where the power balance constraints have been dualized.
\subsection{The UC problem}
\label{sub:unit-commitment}
For the sake of simplicity, we write a general UC problem, see \cite{knueven2020mixed} for more information on formulations of unit commitment problems. Let $\cG$ denote the set of generators, $\cT = \{1,\dots,T\}$ the set of time periods, $L_t$ the inelastic demand of the market consumer at time $t$ and $l_t$ the demand met by the market at time $t$, which must be inferior or equal to $L_t$. As demand is treated as inelastic, we should also consider how non-served energy penalizes total welfare: let $C_{VOLL}$ be the unit cost of non-served energy (or value of lost load).  The state of each generator at time $t$ is defined using four different variables $(p^g_t,u^g_t, v^g_t, w^g_t)$ where $p^g_t$ is the power output, $u^g_t$ its binary commitment decision and $v^g_t$, $w^g_t$ are respectively the start-up and shutdown binary decisions. The UC decision-making problem is described below:
\begin{Problem}{Unit Commitment}{unit-commitment}
\begin{align}
	\min_{(\vp,\vu, \vv, \vl)} \notag \\ F(\vp,\vu, \vv, \vl) \coloneqq  & \sum_{\substack{t \in \cT \\ g \in \cG}}  C(p^g_t, u^g_t,v^g_t) + \sum_{t \in \cT} C_{VOLL} (L_t - l_t) \label{obj:uc}         \\
	                               \text{subject to: }            & \sum_{g \in \cG} p^g_t = l_t \; \forall t \in \cT, \label{constr:uc-balance}                           \\
	                                          & (p^g,u^g, v^g, w^g)\in \cP^g \ \forall g \in \cG, \label{constr:uc-tech} \\
	                                          & 0 \leq l_t \leq L_t \; \forall t \in \cT. \label{constr:uc-load}
\end{align}
\end{Problem}
$C^g(p^g_t, u^g_t,v^g_t)$ denotes the cost of a generator $g$ at time $t$ given its state. This quantity takes into account three types of costs: the constant marginal cost $C_{M}^g$ which is the cost to produce one additional unit of energy, the startup cost $C_{F}^g$ which is the fixed startup cost whenever a generator starts producing and the no load cost $C_{NL}^g$ i.e. the fixed cost of a generator when it is producing. It is computed as follows:
\begin{equation}
	C(p^g_t, u^g_t,v^g_t) = C_{M}^g p^g_t + C_{NL}^g u^g_t + C_{F}^g v^g_t.
\end{equation}
Finally, $\cP^g$ denotes the feasible region for the integral formulation of the generators.

To define it, we first introduce technical constants for each generator describing the minimum $\underline{P}^g$ and maximum running capacity $\overline{P}^g$, the minimum uptime $UT^g$ and downtime $DT^g$, the startup and shutdown levels $SU^g$ and $SD^g$ (maximum power output at the start or at the shutdown of a generator $g$) as well as ramping constraints: ramp-up limit $RU^g$ and ramp-down limit $RD^g$ (maximum allowable increase/decrase in power output between two consecutive timesteps). The constraint $(p^g,u^g, v^g, w^g)\in \cP^g $ can now be explicitly defined as:
\begin{align}
	\label{eq:Pi3bin}
	\begin{cases}
	u_t^g - u^g_{t-1} = v_t^g - w_t^g                                      & \forall t \in \mathcal{T}, \\
	\sum_{i=t-UT^g+1}^{t} v^g_i \leq u^g_t                                   & \forall t \in [UT^g, T],   \\
	\sum^t_{i=t-DT^g+1} w_i^g \leq 1 - u^g_t                                 & \forall t \in [DT^g, T],   \\
	\underline{P}^g u^g_t \leq p^g_t \leq \overline{P}^g u_t^g & \forall t \in \mathcal{T}, \\
	{p}^g_t - p^g_{ t-1} \leq RU^g u^g_{ t-1} + SU^g v^g_t             & \forall t \in \mathcal{T}, \\
	{p}^g_{ t-1} - p^g_t \leq RD^g u^g_t + SD^g w^g_t                  & \forall t \in \mathcal{T}, \\
	u^g_t, v^g_t, w^g_t \in \{0, 1\}                                                     & \forall t \in \mathcal{T}
	\end{cases}
\end{align}
\subsection{Formulations for CH pricing}
\label{sub:formulation-ch}
Convex hull prices are usually defined as the prices minimizing uplifts (or side payments, see Paragraph \ref{par:chp}). In this work, we use instead the following two alternate definitions, proposed and shown to be equivalent in \cite{geoffrion2009lagrangean}.
\begin{lemma} \label{def:ch}
    The following definitions are equivalent:
    \begin{enumerate}
        \item Convex hull prices are the optimal variables of the Lagrangian dual associated with the UC Problem in which one dualizes the balance condition \eqref{constr:uc-balance}.
        \item Convex hull prices are the dual variables $\pi^{CH}$ associated with the power balance constraint \eqref{constr:uc-balance} of \Cref{pb:unit-commitment} where sets $\cP^g$ have been replaced by their convex hull $\text{conv}(\cP^g)$.
    \end{enumerate}
\end{lemma}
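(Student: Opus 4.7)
The plan is to exhibit a single concave function of the multipliers whose maximizers match both definitions. I would first form the partial Lagrangian of \cref{pb:unit-commitment} by attaching multipliers $\pi_t$ to each balance constraint \eqref{constr:uc-balance}. Since the objective \eqref{obj:uc} is linear in $(\vp, \vu, \vv, \vl)$ and the only constraints coupling generators are precisely the dualized ones, the resulting dual function $\varphi(\pi)$ decomposes as a sum of per-generator subproblems
\begin{equation*}
    \varphi_g(\pi) = \min_{(p^g,u^g,v^g,w^g) \in \cP^g} \sum_{t \in \cT} \bigl( C(p^g_t, u^g_t, v^g_t) - \pi_t p^g_t \bigr),
\end{equation*}
plus a residual term depending only on the demand variables $l_t$. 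Definition 1 then characterises CH prices as the maximizers of this concave function of $\pi$.

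Next, I would invoke the classical observation underlying Geoffrion's Lagrangean relaxation framework: minimising a linear function over a bounded set is equivalent to minimising it over the closed convex hull of that set. Because each $\cP^g$ is bounded (binary variables and box-constrained powers) and the inner objective above is linear in $(p^g, u^g, v^g, w^g)$ for every fixed $\pi$, this identity gives $\varphi_g(\pi) = \min_{(p^g,u^g,v^g,w^g)\in\mathrm{conv}(\cP^g)} \sum_{t} ( C(p^g_t, u^g_t, v^g_t) - \pi_t p^g_t)$ pointwise in $\pi$. Summing over $g$, the dual function of \cref{pb:unit-commitment} coincides everywhere with the dual function of its convexified counterpart, in which each $\cP^g$ is replaced by $\mathrm{conv}(\cP^g)$; their sets of maximizers therefore coincide.

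Finally, I would apply linear programming strong duality to the convexified primal. Once each $\mathrm{conv}(\cP^g)$ is described by its (possibly exponentially many) linear facets, the convexified primal is a feasible and bounded LP, so strong duality holds with no duality gap and optimal multipliers $\pi^{CH}$ exist for the balance constraints. By standard LP duality, those multipliers are exactly the maximizers of the associated Lagrangian dual function. Combining this with the previous step, the $\pi^{CH}$ of Definition 2 coincide with the maximizers of $\varphi$ described in Definition 1, which is the claimed equivalence.

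The conceptual obstacle is minor and essentially bookkeeping: I must argue the integrality-type identity uniformly in $\pi$ (which holds because the inner objective remains linear for every $\pi$), and justify that LP strong duality applies to the convexified primal — which follows from feasibility and boundedness, standard in UC instances. No constraint qualification beyond these is required since the convexified primal is a plain LP.
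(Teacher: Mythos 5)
Your argument is correct and is essentially the standard Lagrangean-relaxation equivalence that the paper itself does not prove in-text but delegates to the cited reference (Geoffrion): the dual function is unchanged when each $\cP^g$ is replaced by $\mathrm{conv}(\cP^g)$ because the inner objective is linear and each $\cP^g$ is a bounded union of polytopes, and LP strong duality on the convexified primal identifies its balance-constraint multipliers with the maximizers of that common dual function. No gap to report; your bookkeeping points (closedness of $\mathrm{conv}(\cP^g)$ as a polytope, feasibility and boundedness of the convexified LP) are exactly the right ones to check.
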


We consider Definition 1 of \cref{def:ch}. We only relax the power balance constraint \eqref{constr:uc-balance} as it is the only one binding all generators.
This yields the following Lagrangian function of Lagrange multipliers $\pi_t$ corresponding to the electricity price at time period $t$:
\begin{alignat}{2}
    \label{first-lagr}
    \cL(\pi) = \min_{\vp, \vu, \vv, \vw, \vl} F(\vp,\vu, \vv, \vl) - \sum_{t \in \cT} \pi_t \left(\sum_{g \in \cG} p^g_t - l_t\right) \\
    \text{subject to: }
    (p^g,u^g, v^g, w^g)\in \cP^g \quad \forall g \in \cG, \notag \\
    0 \leq l_t \leq L_t \quad \forall t \in \cT. \notag
\end{alignat}
CH prices $\pi^{CH}$ are the maximizers of $\cL(\pi)$.
In practice, one often considers a set of admissible prices $Q$ which corresponds to market price range restrictions.
After rearranging terms in \eqref{first-lagr}, we obtain the following Lagrangian relaxation problem, (partial) dual of UC:
\begin{equation}
	\label{pb:dual-formulation}
	\pi^{*} = \arg\max_{\pi \in Q} \left[\cL(\pi) = \cL_0(\pi) + \sum_{g \in \cG} \cL_g(\pi)\right]
\end{equation}
where:
\begin{alignat}{2} \label{pb:L0}
	\cL_0(\pi) =
	\min_{\vl} \sum_{t \in \cT} [C_{VOLL} (L_t - l_t) + \pi_t l_t]                   \\
	\text{subject to: }
	          0 \leq l_t \leq L_t \quad \forall t \in \cT \notag
\end{alignat}
and, for each $g \in \cG$,
\begin{alignat}{2} \label{pb:Lg}
	\cL_g(\pi) =
	\min_{(p^g,u^g, v^g, w^g)} \sum_{t \in \cT} [C(p^g_t, u^g_t,v^g_t) - \pi_t p^g_t] \\
	\text{subject to: }
	(p^g, u^g, v^g, w^g)\in \cP^g. \notag
\end{alignat}

\noindent This formulation is separable, meaning we can consider the pricing problem one generator at a time, allowing considerable computational benefits.

A classical result (the proof can e.g. be found in \cite[Corollary 8.3]{conforti2014integer}) is that a supgradient of $L_g(\pi)$, optimal value of problem \eqref{pb:Lg} for a given $\pi$, is given by an optimal solution $-p^g_*(\pi)$ for $\cL_g(\pi)$ since the objective is an affine function of $\pi$. Similarly, a supgradient of $L_0(\pi)$, optimal value of problem \eqref{pb:L0} for a given $\pi$, is given by an optimal solution $\vl_*(\pi)$. Hence, we can compute a supgradient of $\cL$ as follows:
\begin{lemma}[First-order oracle]
\label{lem:oracle}
	Dual function $\cL$ is nonsmooth, concave and piece-wise linear. Moreover, for a given $\pi$, one supgradient is given by
	\begin{equation}
		\left[ \vl^*(\pi) - \sum_{g \in \cG} p_*^g(\pi)\right] \in \left[\hat\partial \cL_0(\pi) + \sum_{g \in \cG}\hat\partial \cL_g(\pi)\right] \subseteq \hat\partial \cL (\pi)
	\end{equation}
	where $\vl^*(\pi)$ and $\vp_*^g(\pi)$ are some optimal solutions of respectively \eqref{pb:L0} and \eqref{pb:Lg}.
\end{lemma}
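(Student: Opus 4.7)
My plan is to observe that each inner problem defining $\cL_0$ and $\cL_g$ has an objective that is affine in $\pi$ when the decision variables are held fixed, and to build both the structural claim (concavity, piecewise-linearity) and the supgradient formula from this single observation. Concretely, $\cL_0$ is a pointwise minimum over $\vl \in [0,L_1]\times\cdots\times[0,L_T]$ of affine functions of $\pi$, and likewise $\cL_g$ is a pointwise minimum over $\cP^g$ of affine functions of $\pi$; since a pointwise infimum of affine functions is concave, this immediately yields concavity of each term and hence of their sum $\cL$. Piecewise-linearity follows because the minimum in each case is attained at an extreme point of a bounded polyhedron (for $\cL_0$) or at one of finitely many integer-feasible vertices of $\cP^g$ (for $\cL_g$), so $\cL_0$ and each $\cL_g$ are minima of \emph{finitely} many affine functions of $\pi$.

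For the supgradient itself I would treat $\cL_0$ and $\cL_g$ separately via the same elementary envelope argument. Fix $\pi$ and take an optimal $\vl^*(\pi)$ for \eqref{pb:L0}. Then for any neighboring price $\pi'$, the point $\vl^*(\pi)$ remains feasible and its objective value at $\pi'$ upper-bounds $\cL_0(\pi')$, giving
\begin{equation*}
\cL_0(\pi') \leq \cL_0(\pi) + \langle \vl^*(\pi), \pi' - \pi\rangle,
\end{equation*}
which is exactly the defining inequality of the superdifferential, so $\vl^*(\pi)\in\hat\partial\cL_0(\pi)$. The analogous computation for $\cL_g$ uses that $(p^g_*(\pi),u^g_*(\pi),v^g_*(\pi),w^g_*(\pi))$ stays feasible in $\cP^g$ independently of $\pi$, and that only the term $-\pi_t p^g_t$ depends on $\pi$, yielding $-\vp^g_*(\pi)\in\hat\partial\cL_g(\pi)$.

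To conclude, I would invoke the elementary sum-rule inclusion $\hat\partial f(\pi)+\hat\partial g(\pi)\subseteq\hat\partial(f+g)(\pi)$, which in the concave case follows immediately by adding the defining supgradient inequalities and requires no qualification condition. Summing over $g\in\cG$ and the index $0$ delivers the displayed inclusion. Nonsmoothness is then inherited from piecewise-linearity: at prices $\pi$ where several extreme points of $\cP^g$ (or of the load box) are simultaneously optimal, the superdifferential is a nontrivial polytope rather than a singleton.

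The only subtle point, mild but worth flagging, is the non-convexity of the mixed-integer feasible set $\cP^g$. The envelope argument nevertheless carries through without change because feasibility of a given point is a $\pi$-independent property; no convex combinations inside $\cP^g$ are ever taken. One could equivalently appeal directly to \cite[Corollary 8.3]{conforti2014integer}, already cited in the excerpt, which provides exactly this envelope-style supgradient formula for the Lagrangian of an integer program.
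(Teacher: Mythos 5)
Your proof is correct and follows essentially the same route as the paper, which simply invokes the classical envelope result (citing \cite[Corollary 8.3]{conforti2014integer}) together with the observation that each inner objective is affine in $\pi$; you have merely spelled out the details the paper leaves implicit, including the correct remark that non-convexity of $\cP^g$ is harmless because feasibility does not depend on $\pi$.
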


\noindent The minimization problem in $\cL_0(\pi)$ can be solved in closed form.
\begin{lemma}
	We have:
    \begin{align} \label{eq:closed-L0}
    l^*_t(\pi) & = L_t \times \mathbf{1}_{\{\pi_t < C_{VOLL}\}} \quad \forall t \in \cT \\  \label{eq:closed-L0-2}
    \cL_0(\pi) &= \sum_{t \in \cT} L_t \left[C_{VOLL} - \max\{0, C_{VOLL} - \pi_t\}\right]
	\end{align}
\end{lemma}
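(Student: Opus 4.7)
The plan is to exploit the separability of the objective across time periods: the sum $\sum_{t \in \cT} [C_{VOLL}(L_t - l_t) + \pi_t l_t]$ decomposes into $T$ independent one-dimensional linear programs, each over the box $0 \le l_t \le L_t$. So the first step is to rewrite, for each $t$, the per-period cost as $C_{VOLL} L_t + (\pi_t - C_{VOLL}) l_t$, after which minimization only depends on the sign of the coefficient $(\pi_t - C_{VOLL})$.

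Next, I would argue case by case. When $\pi_t < C_{VOLL}$ the coefficient is strictly negative, so the minimizer pushes $l_t$ to its upper bound $L_t$. When $\pi_t > C_{VOLL}$ the coefficient is strictly positive, so the minimizer is $l_t = 0$. When $\pi_t = C_{VOLL}$ the objective is constant in $l_t$, and we pick the convention $l_t^* = 0$ consistent with the strict inequality in the indicator $\mathbf{1}_{\{\pi_t < C_{VOLL}\}}$. This gives the formula \eqref{eq:closed-L0} for $l_t^*(\pi)$.

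For the optimal value \eqref{eq:closed-L0-2}, I would substitute back: in the first case the value is $C_{VOLL} L_t + (\pi_t - C_{VOLL}) L_t = L_t\bigl[C_{VOLL} - (C_{VOLL} - \pi_t)\bigr]$, and in the second case it is $C_{VOLL} L_t = L_t\bigl[C_{VOLL} - 0\bigr]$. Since $C_{VOLL} - \pi_t > 0$ exactly when $\pi_t < C_{VOLL}$, both cases are captured uniformly by $L_t\bigl[C_{VOLL} - \max\{0, C_{VOLL} - \pi_t\}\bigr]$, and summing over $t$ yields the claim.

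There is really no main obstacle here: the statement follows from elementary LP on an interval once separability is invoked. The only small subtlety worth flagging is the tie-breaking at $\pi_t = C_{VOLL}$, where $l_t^*(\pi)$ is not unique; the stated formula selects one particular supgradient among the set described in \cref{lem:oracle}, which is sufficient for first-order oracle purposes.
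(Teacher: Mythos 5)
Your proof is correct and follows essentially the same route as the paper, which simply observes that the problem separates over time periods and that each one-dimensional linear program is solved by consuming $L_t$ when $\pi_t < C_{VOLL}$ and nothing when $\pi_t > C_{VOLL}$, with any $l_t \in [0, L_t]$ optimal at the tie. Your version merely spells out the substitution verifying \eqref{eq:closed-L0-2} and the tie-breaking convention, both of which the paper leaves implicit.
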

\begin{proof}
	Equation \eqref{eq:closed-L0} has a simple interpretation: to solve \eqref{pb:L0}, one consumes as much as possible if $C_{VOLL} > \pi_t$ and nothing if $C_{VOLL} < \pi_t$, while any value $l_t \in [0; L_t]$ is optimal when $C_{VOLL} = \pi_t$. 
\end{proof}
Constant term $L_t C_{VOLL}$ can be removed from \eqref{eq:closed-L0-2} as well as from \eqref{obj:uc} and \eqref{pb:L0} as far as finding optimal solutions is concerned.

The dual CH pricing problem \eqref{pb:dual-formulation} is a concave maximization problem, but solving it is equivalent to minimizing the opposite objective, a nonsmooth convex function. Because most methods are originally presented as nonsmooth convex minimization algorithms, we will from now on adopt the vocabulary of nonsmooth convex minimization by considering $\bar{\cL}(\pi) = - {\cL}(\pi)$ with subgradient $[\sum_{g \in \cG} \vp_*^{g}(\pi)] - \vl^*(\pi)$. 

\section{First-order methods}
\label{sec:first-order-methods}
A myriad of first-order methods exist to solve nonsmooth convex optimization problems with guaranteed convergence. Using any of those we can compute  CH prices using the dual formulation \eqref{pb:dual-formulation}, and perform at each iteration a first-order oracle call (see Lemma~\ref{lem:oracle}), which amounts to  solving $|\mathcal{G}|$ (small) MILPs in \eqref{pb:Lg}, e.g. with an off-the-shelf solver. 

We leave the discussion on the stopping criterions for \cref{subsec:benchmark}. We follow the common approach (see e.g. in \cite{stevens2022application}) of setting the set of admissible prices $Q$ as $Q = [\pi_{min}, \pi_{max}]^T$. Most methods presented below are new in the context of computing CH prices, apart from the bundle level method (\cref{alg:BLM}) proposed in \cite{stevens2022application} and the subgradient method with vanishing step lengths (\cref{alg:SM_s_l}) proposed in \cite{ito2013convex}. 
\subsection{Subgradient-based methods}
Projected subgradient methods \cite{boyd2003subgradient} are the simplest iterative algorithms for solving nonsmooth convex minimization problems, with the following algorithmic description:
\begin{algorithm}[H]
	\caption{Subgradient method (vanishing step lengths)}
	\begin{algorithmic}
		\smallskip
		\State \textbf{Parameters:} initial step length $\eta$
		\smallskip
		\State \textbf{Inputs:} initial iterate $\pi^1 \in Q$
		\smallskip
		\Repeat{ for $k=1, 2, \dots$}
		\begin{enumerate}
			\item
			      Select a subgradient $g^k\in \partial \bar{\cL}(\pi^k)$.
			\item
			      Compute $\pi^{k+1}=P_Q\left(\pi^k- \tfrac{\eta}{k} \frac{g^k}{\|g^k\|} \right)$.
		\end{enumerate}
        \Until{stopping criterion is satisfied.} 
		\smallskip
		\State \textbf{Output:} best iterate $\pi_{\arg\min_k \{\bar{\cL}(\pi_k)\} }$
	\end{algorithmic}
	\label{alg:SM_s_l}
\end{algorithm}
We present a version using normalized subgradients and therefore call the stepsize a "step length". This presents the theoretical advantage of not requiring knowledge of an upper bound on the subgradient norm.
Note that one could also use a step length schedule decreasing in $\tfrac{\eta}{\sqrt{k}}$, with similar results.
The projected subgradient method (\cref{alg:SM_s_l}) is one of the earliest approaches to compute CH prices, see \cite{ito2013convex}.
The method is however notoriously slow and hard to tune as its performance is significantly influenced by the choice of both the initial iterate $\pi^1$ and the stepsize sequence, i.e. initial step length $\eta$.

With regard to the initial iterate, we propose a warm-start strategy in \cref{subsec:warm-start}. With regard to the stepsize sequence, B. Polyak suggested in 1963 \cite{polyak1963gradient} a particular stepsize for gradient-type methods (now called the Polyak stepsize) whenever the value of the optimum $\bar{\mathcal{L}}(\pi^*)$ is known:
\begin{equation}
    \label{eq:polyak-stepsize}
    h_k = \frac{\bar{\mathcal{L}}(\pi^k) - \bar{\mathcal{L}}(\pi^*)}{\|g^k\|^2}
\end{equation}
As the value $\bar{\mathcal{L}}(\pi^*)$ is not known in practice, an existing idea (see \cite{boyd2003subgradient}) is to approximate it using the lowest objective value so far $\bar{\cL}^k_{best} = \min_{i \leq k} \{\bar{\cL}^i\}$ and an estimate of the gap $\frac{\alpha}{k}$, which yields the following:
\begin{algorithm}[H]
	\caption{Subgradient method (estimated Polyak stepsizes)}
	\begin{algorithmic}
		\smallskip
		\State \textbf{Parameters:} initial suboptimality estimate $\alpha > 0$
		\smallskip
		\State \textbf{Inputs:} initial iterate $\pi^1 \in Q$
		\smallskip
		\Repeat{ for $k=1, 2, \ldots$}
		\begin{enumerate}
			\item
			      Select a subgradient $g^k\in \partial \bar{\cL}(\pi^k)$.
			\item
			      Get $\pi^{k+1}=P_Q\left(\pi^k- \left(\bar{\cL}^k - \left(\bar{\cL}^k_{best} - \tfrac{\alpha}{k}\right) \right) \frac{g^k}{\|g^k\|^2} \right)$.
		\end{enumerate}
        \Until{stopping criterion is satisfied.}
		\smallskip
		\State \textbf{Output:} best iterate $\pi_{\arg\min_k \{\bar{\cL}(\pi_k)\} }$
	\end{algorithmic}
	\label{alg:SM_E_P_s_l}
\end{algorithm}
We also investigate the efficiency of a recently proposed \cite{zamani2023exact} optimal variant of the subgradient method with a specific linearly decaying step length schedule that achieves the best possible last-iterate accuracy after a given fixed number of iterations, presented below:
\begin{algorithm}[H]
	\caption{Last-iterate optimal subgradient method}
	\begin{algorithmic}
		\smallskip
		\State \textbf{Parameters:} Upper bound $R$ such that $\|\pi^1 - \pi^*\| \leq R$
		\smallskip
		\State \textbf{Inputs:} initial iterate $\pi^1 \in Q$, number of iterations $N$
		\smallskip
		\State For $k=1, 2, \ldots, N$ perform the following steps:
		\begin{enumerate}
			\item
			      Select a subgradient $g^k\in \partial \bar{\cL}(\pi^k)$.
			\item
			      Compute $\pi^{k+1}=P_Q\left(\pi^k- \tfrac{R(N+1-k)}{\sqrt{(N+1)^3}} \frac{g^k}{\|g^k\|} \right)$.
		\end{enumerate}
		\smallskip
		\State \textbf{Output:} last iterate $\pi_{N+1}$
	\end{algorithmic}
	\label{alg:OSM_s_l}
\end{algorithm}
\subsection{Parameter-free methods}
Parameter-free methods are methods which do not require knowledge of the problem parameters to run, such as an upper bound on the subgradient norm.
\subsubsection{Bundle (level) methods}
Bundle level methods \cite{Lemaréchal1995new} are arguably among the first \emph{parameter-free} algorithms for nonsmooth convex optimization. The idea of bundle methods is to aggregate subgradient information in a bundle to maintain a polyhedral approximation of the objective function. In the CH pricing context, $\bar{\mathcal L} (\pi)$ is itself piecewise linear thus the approximation is exact for each facet on which a subgradient has been computed. This bundle approach changes considerably from previously considered algorithms where at each iteration, the current iterate is updated using only the subgradient at the previous point. The "level" term refers to how new iterates are computed and added in the bundle.

We present two bundle method variants: the Bundle Level Method (BLM), introduced in \cite{Lemaréchal1995new} and first used in the context of CH pricing in \cite{stevens2022application} and an adaptation of the Bundle Proximal Level method (BPLM), also introduced in \cite{Lemaréchal1995new} but never used in the context of CH pricing.
\begin{algorithm}[ht]
	\caption{Bundle Level Method (BLM)}
	\begin{algorithmic}
		\smallskip
		\State \textbf{Parameters:} level set parameter $\alpha \in (0,1)$.
		\smallskip
		\State \textbf{Inputs:} initial iterate $\pi^1 \in Q$
		\smallskip
		\State Initialize $UB = +\infty, LB = -\infty$.
		\smallskip
		\Repeat{ for $k=1,2,\ldots$} 
		\begin{enumerate}
			\item Compute first-order oracle $(\bar{\cL}(\pi^k), g^k\in \partial \bar{\cL}(\pi^k))$.
			\item Update upper bound $UB = \min \{UB, \bar{\cL}(\pi^k)\}$.
			\item Update lower bound $LB = \min \{ \, t \text{ s.t. } \pi \in Q \text{ and }   \bar{\cL}(\pi^i) + \langle g^i, \pi - \pi^i \rangle \leq t, \; \forall i \leq k\}$.
			\item Update level set $\cS = LB + \alpha (UB - LB)$.
			\item Update iterate $\pi^{k+1} = \min \{ \, \|\pi - \pi^k\| \text{ s.t. } \pi \in Q \text{ and }   \bar{\cL}(\pi^i) + \langle g^i, \pi - \pi^i \rangle \leq \cS, \; \forall i \leq k\}$.
		\end{enumerate}
        \Until{stopping criterion is satisfied.}
		\smallskip
		\State \textbf{Output:} best iterate $\pi_{\arg\min_k \{\bar{\cL}(\pi_k)\} }$
	\end{algorithmic}
	\label{alg:BLM}
\end{algorithm}
In the original description of the Bundle Proximal Level Method \cite{Lemaréchal1995new}, the best iterate so far is projected on the level set at each iteration. We introduce a variant where we project the last iterate instead (see line 7 of the algorithm below) after observing in preliminary experiments that this variant outperforms the original.
\begin{algorithm}[H]
	\caption{Bundle Proximal Level Method, last iterate projection (BPLM)}
	\begin{algorithmic}
		\smallskip
		\State \textbf{Parameters:} level set parameter $\alpha \in (0,1)$.
		\smallskip
		\State \textbf{Inputs:} initial iterate $\pi^1 \in Q$
		\smallskip
		\State Initialize $UB = +\infty, LB = -\infty, \Delta = +\infty, \cS' = +\infty$.
		\smallskip
		\Repeat{ for $k=1, 2, \ldots$}
		\begin{enumerate}
			\item Compute first-order oracle $(\bar{\cL}(\pi^k), g^k\in \partial \bar{\cL}(\pi^k))$.
			\item Update upper bound $UB = \min \{UB, \bar{\cL}(\pi^k)\}$.
			\item Update lower bound $LB = \min \{ \, t \text{ s.t. } \pi \in Q \text{ and }   \bar{\cL}(\pi^i) + \langle g^i, \pi - \pi^i \rangle \leq t, \; \forall i \leq k\}$.
			\item Update level set $\cS = LB + \alpha (UB - LB)$.
            \item $
            \cS' =
            \begin{cases}
            \min\{\cS, \cS'\} & \text{if } UB - LB \geq (1-\alpha)\Delta,\\[1mm]
            \cS & \text{otherwise,}
            \end{cases}
            $
            \item $
            \Delta =
            \begin{cases}
            \Delta & \text{if } UB - LB \geq (1-\alpha)\Delta,\\[1mm]
            UB - LB & \text{otherwise.}
            \end{cases}
            $
			\item Update iterate $\pi^{k+1} = \min \{ \, \|\pi - \textcolor{red}{\pi^{k}}\| \text{ s.t. } \pi \in Q \text{ and }   \bar{\cL}(\pi^i) + \langle g^i, \pi - \pi^i \rangle \leq \cS', \; \forall i \leq k\}$.
		\end{enumerate}
        \Until{stopping criterion is satisfied.}
		\smallskip
		\State \textbf{Output:} best iterate $\pi_{\arg\min_k \{\bar{\cL}(\pi_k)\} }$
	\end{algorithmic}
	\label{alg:BPLM_last}
\end{algorithm}
Both bundle level algorithms make at each iteration one oracle call, then solve an LP to update the polyhedral model's lower bound LB (line 3 in both algorithms), then solve a QP to project onto the level set 
(line 5 in BLM and line 7 in BPLM). The difference lies in the level set computation: in BPLM, $\cS$ cannot increase until the gap $\Delta$ decreases substantially (line 5), whereas in BLM, it is updated at every iteration.

An interesting advantage of both bundle methods is that the gap $\Delta = UB - LB$ provides us with an estimate of the current accuracy. This will be leveraged in \cref{subsec:optimal-sol} to compute near-optimal convex hull prices.
\subsubsection{Recent parameter-free methods}
Parameter-free methods have regained significant attention during the last few years and multiple new methods have been proposed, in particular in the context of optimization for machine learning, see \cite{nesterov2015universal, orabona2016coin,cutkosky2017online, roulet2019sharpness, defazio2023dadapt, khaled2023dowg, li2023simple}. We present two promising methods among the latter, D-Adaptation \cite{defazio2023dadapt} and DowG \cite{khaled2023dowg} and adapt them to our CH pricing problem.

\begin{algorithm}[H]
	\caption{D-Adaptation (DA)}
	\begin{algorithmic}
		\smallskip
		\State \textbf{Parameters:} initial lower bound $D_1$ on $\|\pi^1 - \pi^*\|$
		\smallskip
		\State \textbf{Inputs:} initial iterate $\pi^1 \in Q$
		\smallskip
		\State Initialize $s^1 = 0, \gamma^1 = \|g^1\|^{-1}$.
		\smallskip
		\Repeat{ for $k=1, 2, \ldots$}
		\begin{enumerate}
			\item Select a subgradient $g^k\in \partial \bar{\cL}(\pi^k)$.
			\item Compute $s^{k + 1} = s^k + D_k g^k$.
			\item Compute $\gamma^{k + 1} = (\sum_{i = 1}^k \|g^i\|^2)^{-\tfrac{1}{2}}$.
			\item $D_{k+1} = \max \left( D_k, \frac{\gamma^{k + 1}\|s^{k+1}\|^2 - \sum_{i \leq k} \gamma^i D_i^2 \|g^i\|^2}{2 \|s^{k+1}\|}\right)$.
			\item Compute $\pi^{k+1}=P_Q\left(\pi^k- \gamma^{k + 1} s^{k + 1}\right)$.
		\end{enumerate}
        \Until{stopping criterion is satisfied.}
		\smallskip
		\State \textbf{Output:} best iterate $\pi_{\arg\min_k \{\bar{\cL}(\pi_k)\} }$
	\end{algorithmic}
	\label{alg:DAdaptation}
\end{algorithm}
\begin{algorithm}[H]
	\caption{Distance over Weighted Gradients (DoWG)}
	\begin{algorithmic}
		\smallskip
		\State \textbf{Parameters:} initial lower bound $d_1$ on $\|\pi^1 - \pi^*\|$
		\smallskip
		\State \textbf{Inputs:} initial iterate $\pi^1 \in Q$
		\smallskip
		\State Initialize $v^0 = 0$.
		\smallskip
		\Repeat{ for $k=1, 2, \ldots$}
		\begin{enumerate}
			\item Select a subgradient $g^k\in \partial \bar{\cL}(\pi^k)$.
			\item Compute distance estimator $d_{k+1} = \max (d^{k} , \|\pi^k - \pi^1\|)$.
			\item Compute weighted gradient sum $v^{k} = v^{k-1} + d_{k+1}^2 \|g^k\|^2$.
			\item Compute step size $\eta_k = \tfrac{d_{k+1}^2}{\sqrt{v^k}}$.
			\item Compute step $\pi^{k+1}=P_Q\left(\pi^k- \eta_k g^k\right)$.
		\end{enumerate}
        \Until{stopping criterion is satisfied.}
		\smallskip
		\State \textbf{Output:} best iterate $\pi_{\arg\min_k \{\bar{\cL}(\pi_k)\} }$
	\end{algorithmic}
	\label{alg:DowG}
\end{algorithm}

Both methods try to circumvent the issue of not knowing the distance to the optimal solution $R$ by using an initial distance estimate. This estimate is then updated at each iteration to approximate the true distance. This approach helps selecting the stepsize adaptively and allow convergence without knowing any of the problem parameters.
\subsection{Smoothing-based methods}
First-order methods for convex nonsmooth minimization have a
convergence rate of order $\cO(\varepsilon^{-2})$, where $\varepsilon$ is the desired accuracy for the approximate solution, and the order of this complexity is optimal. On the other hand, simple convex smooth minimization methods have convergence rates of order $\cO(\varepsilon^{-1})$ or $\cO(\varepsilon^{-0.5})$ for the fastest algorithms. In order to bypass the slow rate inherent to nonsmooth minimization, we use a smooth approximation of the original problem \eqref{pb:dual-formulation} using Nesterov's smoothing technique \cite{nesterov2005smooth} and apply a fast projected gradient method \cite{beck2009fast} to solve the given approximation.

We recall the original dual objective $\cL(\pi) = \cL_0(\pi) + \sum_{g \in \cG} \cL_g(\pi)$
with $\cL_0$ and $\cL_g$ described in \eqref{pb:L0} and \eqref{pb:Lg}.
We propose to smooth both $\cL_0$ and $\cL_g$ as both are nondifferentiable and have an explicit min-structure. Given reference points $\vl^{r}$ and $(\vp^g,\vu^g, \vv^g, \vw^g)^{r}$ and a smoothing parameter $\varsigma > 0$, we introduce
\begin{alignat}{2} \label{smooth-L0}
    \cL_{0,\varsigma}(\pi) &= \min_{\vl} \sum_{t \in \cT} \big[ C_{VOLL} (L_t - l_t) + \pi_t l_t\big] 
    + \frac{\varsigma}{2} \|\vl - \vl^{r}\|^2  \notag \\
    &\quad \text{s.t.} \quad 0 \leq l_t \leq L_t, \quad \forall t \in \cT.
\end{alignat}

\vspace{-4mm}
\begin{alignat}{2} \label{smooth-Lg}
    \cL_{g,\varsigma}(\pi) &= \min_{(\vp^g,\vu^g, \vv^g, \vw^g)} \sum_{t \in \cT} \big[ C(p^g_t, u^g_t, v^g_t) 
    - \pi_t p^g_t\big]  \notag \\
    &\quad + \frac{\varsigma}{2} \|(\vp^g, \vu^g , \vv^g, \vw^g) - (\vp^g, \vu^g, \vv^g, \vw^g)^{r}\|^2 \notag \\
    &\quad \text{s.t.} \quad (p^g_t, u^g_t, v^g_t, w^g_t) \in \cP^g, \quad \forall t \in \cT.
\end{alignat}
which are the smooth approximations of $\cL_0$ and $\cL_g$.
\begin{lemma}
	Given smoothing parameter $\varsigma > 0$ and reference points $\vl^{r}$ and $(\vp, \vu, \vv, \vw)^{r}$, the gradient of the full smooth approximation $\tilde{\cL}_\varsigma$ is computed as follows
	\begin{equation}
		\nabla \tilde{\cL}_\varsigma (\pi) =  \sum_{g \in \cG} (p^g_{\varsigma,*}(\pi)) - l^{*}_\varsigma(\pi) 
	\end{equation}
 	where $p_{\varsigma,*}^g(\pi)$ and $l^{*}_\varsigma(\pi)$ are optimal solutions of respectively \eqref{smooth-Lg} and \eqref{smooth-L0}, both unique by strong convexity of the objective functions. Moreover, the gradient $\nabla \tilde{\cL}_\varsigma$ is Lipschitz continuous with constant $\tfrac{1 + |\cG|}{\varsigma}$.
\end{lemma}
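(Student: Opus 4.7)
The plan is to treat each smoothed term separately, apply Danskin's theorem to get the gradient formula, and then obtain the Lipschitz constant by summing per-term Lipschitz estimates for the minimizer maps, each derived from strong convexity.

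First, I would verify uniqueness and the gradient formula. For each fixed $\pi$, the objective inside $\cL_{0,\varsigma}(\pi)$ is the sum of a linear function in $\vl$ and the $\varsigma$-strongly convex quadratic $\frac{\varsigma}{2}\|\vl - \vl^r\|^2$, so it is $\varsigma$-strongly convex on the convex box $\prod_t[0,L_t]$ and admits a unique minimizer $\vl^*_\varsigma(\pi)$. The same reasoning applies to $\cL_{g,\varsigma}(\pi)$: interpreting the feasible region as $\mathrm{conv}(\cP^g)$ (which, as noted in Lemma~\ref{def:ch}, leaves the Lagrangian values unchanged and is needed here because a strongly convex objective distinguishes the convex hull from the integer set), the objective is $\varsigma$-strongly convex in $(\vp^g,\vu^g,\vv^g,\vw^g)$ and has a unique minimizer. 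Since the integrand depends smoothly (in fact linearly) on $\pi$, Danskin's theorem applies and gives
\begin{equation*}
\nabla_\pi \cL_{0,\varsigma}(\pi) = \vl^*_\varsigma(\pi), \qquad \nabla_\pi \cL_{g,\varsigma}(\pi) = -\vp^g_{\varsigma,*}(\pi).
\end{equation*}
Summing and taking the sign convention $\tilde{\cL}_\varsigma = -(\cL_{0,\varsigma} + \sum_g \cL_{g,\varsigma})$ used throughout the paper to move from concave maximization to convex minimization yields the stated expression $\nabla \tilde{\cL}_\varsigma(\pi) = \sum_{g \in \cG}\vp^g_{\varsigma,*}(\pi) - \vl^*_\varsigma(\pi)$.

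Second, I would establish Lipschitz continuity of each minimizer map through a standard strong-convexity argument. For two prices $\pi_1, \pi_2$, write the first-order optimality conditions for the minimizers of each smoothed subproblem and add the two inequalities obtained by swapping roles: for a generic $\mu$-strongly convex objective depending linearly on a parameter $a$ through a term $\langle a, x\rangle$, this yields the classical bound $\|x^*(a_1) - x^*(a_2)\|\leq \tfrac{1}{\mu}\|a_1-a_2\|$. Applied to $\cL_{0,\varsigma}$ with $\mu=\varsigma$ and $a=\pi$, this gives $\|\vl^*_\varsigma(\pi_1) - \vl^*_\varsigma(\pi_2)\| \leq \tfrac{1}{\varsigma}\|\pi_1 - \pi_2\|$. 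Applied to each $\cL_{g,\varsigma}$, with $a=-\pi$ acting on the $\vp^g$ coordinates only, it gives $\|\vp^g_{\varsigma,*}(\pi_1) - \vp^g_{\varsigma,*}(\pi_2)\| \leq \tfrac{1}{\varsigma}\|\pi_1-\pi_2\|$.

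Third, I would assemble the global Lipschitz bound by the triangle inequality:
\begin{equation*}
\|\nabla \tilde{\cL}_\varsigma(\pi_1) - \nabla \tilde{\cL}_\varsigma(\pi_2)\|
\leq \|\vl^*_\varsigma(\pi_1) - \vl^*_\varsigma(\pi_2)\| + \sum_{g \in \cG}\|\vp^g_{\varsigma,*}(\pi_1) - \vp^g_{\varsigma,*}(\pi_2)\| \leq \frac{1+|\cG|}{\varsigma}\|\pi_1-\pi_2\|.
\end{equation*}
The main obstacle I anticipate is not the calculation itself but justifying that the strong-convexity argument is sound despite the integrality in $\cP^g$: this requires the observation that, for smoothing to yield a convex smooth problem with well-defined gradient, one must consider $\mathrm{conv}(\cP^g)$, and this substitution is legitimate because the Lagrangian relaxation value (and hence the smoothed proxy) is unchanged. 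Everything else is a direct application of Danskin's theorem and the standard minimizer-stability lemma.
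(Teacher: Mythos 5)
The paper states this lemma without any proof, so there is nothing to compare against line by line; judged on its own, your argument is correct and is the natural route: Danskin's theorem for the gradient formula, the standard $\tfrac{1}{\mu}$-Lipschitz stability of the minimizer of a $\mu$-strongly convex objective under linear perturbations of the parameter, and a triangle inequality giving the constant $\tfrac{1+|\cG|}{\varsigma}$. Your remark that the feasible set must be read as $\mathrm{conv}(\cP^g)$ is in fact the key point the paper glosses over: over the nonconvex mixed-integer set $\cP^g$ the strongly convex objective need not have a unique minimizer, the variational-inequality form of optimality used in your stability step is unavailable, and the resulting value function can retain kinks, so the lemma is only correct under the convex-hull reading (which is also what Nesterov's smoothing framework requires). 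The one imprecision is your parenthetical that the ``smoothed proxy is unchanged'' by this substitution: only the unsmoothed values $\cL_g(\pi)$ coincide over $\cP^g$ and $\mathrm{conv}(\cP^g)$ (linear objective), whereas the smoothed values generally differ; this does not damage the proof, since the lemma's claims are about the smooth approximation itself and either variant is an admissible approximation of $\cL_g$, but the justification should be phrased as ``the convexified smoothing still uniformly approximates the same $\cL_g$'' rather than ``the proxy is unchanged.''
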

We will use as fixed reference points for the smooth approximation the optimal solutions of the continuous linear relaxation of \Cref{pb:unit-commitment}.
The fast gradient algorithm used to solve the above smooth approximation is provided below.
\begin{algorithm}[H]
	\caption{Fast (Projected) Gradient Method (FGM)}
	\begin{algorithmic}
		\smallskip
		\State \textbf{Parameters:} initial step length $\eta$, smoothing parameter $\varsigma$
		\smallskip
		\State \textbf{Inputs:} initial iterate $\pi^1 \in Q$
		\smallskip
        \State Initialize $y^1 = \pi^1$.
		\smallskip
		\Repeat{ for $k=1, 2, \ldots$}
		\smallskip
		\begin{enumerate}
			\item Compute $\nabla \tilde{\cL}_\varsigma (y^k)$.
			\item
			      Compute $\pi^{k+1}=P_Q\left(y^k- \eta \nabla \tilde{\cL}_\varsigma (y^k) \right)$.
			\item Compute $y^{k + 1} = \pi^{k+1} + \tfrac{k - 1}{k + 2}(\pi^{k+1} - \pi^k)$.
		\end{enumerate}
        \Until{stopping criterion is satisfied.}
		\smallskip
		\State \textbf{Output:} best iterate $\pi_{\arg\min_k \{\bar{\cL}(\pi_k)\} }$
	\end{algorithmic}
	\label{alg:FGM}
\end{algorithm}
\section{Numerical experiments}
\label{sec:experiments}
\subsection{Experimental framework details}
\subsubsection{Experimental setup}
Each method is coded in Julia (1.10.0) using JuMP (1.18.0) and uses Gurobi (11.0) with default parameters (except for the relative MIP gap tolerance, set to $10^{-8}$) to solve subproblems. The codebase used for our experiments is distributed open source as a Julia package \texttt{ConvexHullPricing.jl}. All experiments were run on a standard laptop (i7, 4.70GHz, 8 threads, 32 GB of RAM).
\subsubsection{Datasets}
We run all our experiments on 24 real-world UC problem instances.
Specifically, we consider  a \emph{Belgian} dataset containing 8 real-world instances (one for each season (winter, spring, summer, autumn) and for each week period (weekday, weekend)) with 68 Belgian generators on a 24-hour time horizon with a 15 minutes time-step.
We also consider a \emph{Californian} dataset containing 16 curated instances\footnote{Among the 20 instances in the provided repository, we exclude 4 of them because they are not relevant in comparing methods, as our warm-start technique directly provides CH prices.} extracted from publicly available data\footnote{Data available at \href{https://github.com/power-grid-lib/pglib-uc}{\nolinkurl{https://github.com/power-grid-lib/pglib-uc}}.} from the California ISO.
We summarize these datasets in the table below:
    \begin{table}[hbt]
    	\centering
    	\caption{Summary of the datasets}
    	\label{tab:datasets}
    	\begin{tabular}{lcccc}
    		\toprule
    		{Dataset name} & {\# instances} & {\# generators} & {\# time periods} & {source}                      \\ \midrule
    		Belgian & 8 & 68 & 96 & \cite{stevens2022application} \\
    		Californian & 16 & 610 & 48 & \cite{knueven2020mixed} \\
    		\bottomrule
    	\end{tabular}
    \end{table}

\subsubsection{Computation of optimal values \texorpdfstring{$\mathcal{L}_*$}{L*}}
\label{subsec:optimal-sol}

All considered datasets originally describe unit commitment instances, and thus do not include the set of optimal solutions to our CH pricing problem. 

As we want to compare the (relative) objective accuracy achieved by all proposed methods (we do not compare solutions because CH prices may not be unique), we compute for each dataset a very accurate estimate of the optimal value $\mathcal{L}_*$. On each of the 24 instances, we make a long run of the Bundle Level Method with parameter $\alpha = 0.2$ and exact oracle computations (Gurobi, the MILP solver in our experiments, is parametrized with absolute and relative MILP gap tolerance of $0$) until the absolute gap between the constructed upper and lower bounds is lower than $40\$$, corresponding approximately to a $10^{-9}$ relative gap in all our experiments. These computations took up to 6 hours. 
Note that this optimal value $\mathcal{L}_*$ is not provided to the methods. 

\subsubsection{Hyperparameter selection}
All presented methods feature a single hyperparameter. Hence, the tuning of that single hyperparameter for each method does not unfairly benefit any approach.
Benchmarks in the next subsection are computed after tuning all methods on one instance per dataset (on \emph{Winter Week-end} for the \emph{Belgian} dataset and on \emph{2014-09-01\_reserves\_0} for the \emph{Californian} dataset). 
To perform our hyperparameter tuning, we run each method during $15$ minutes for each value of the hyperparameter belonging to a coarse logarithmic grid with 11 values. We then make a second finer grid search with 5 values around the best found parameter in the coarse grid search. Results of the hyperparameter tuning are found in the table below:
\begin{table}[H]
	\centering
	\caption{List of tuned hyperparameters for each considered optimization method in the benchmarks}
	\label{tab:hyperparameters}
	\resizebox{\columnwidth}{!}{%
		\begin{tabular}{@{}rlcc@{}}
			\toprule
			\multicolumn{1}{c}{Method} & Hyperparameter            & BE & CA \\ \midrule
            SUBG & Initial step length $\eta \in [10^{-5}; 10^5]$ & $100$ & $0.3$ \\
            SUBG-EP & Initial error estimate $\alpha \in [10^{-5}; 10^5]$ & $32000$ & $300$ \\
			SUBG-L & Initial distance estimate $R\in [10^{-5}; 10^5]$ & $40$ & $0.3$ \\
			BLM & Level parameter $\alpha \in [0.1,1.0]$ & $0.9$        & $0.9$ \\
			BPLM-L & Level parameter $\alpha \in [0.1,1.0]$ & $0.95$ & $0.9$ \\
			DA & Initial distance estimate $D_1 \in [10^{-5}; 10^5]$ & $10$ & $0.15$ \\
			DOWG & Initial distance estimate $d_0 \in [10^{-5}; 10^5]$ & $20$ & $0.1$ \\
			FGM & Constant step length $\eta \in [10^{-5}; 10^5]$ & $10^{-4}$ & $10^{-5}$ \\
			CG & Tolerance parameter $\varepsilon \in [10^{-9},10^{1}]$ & $10^{-6}$    & $10^{-6}$    \\\bottomrule
		\end{tabular}%
	}
\end{table}

\noindent The \emph{SUBG-L} method also requires the total number of iterations as input parameter. To make its tuning  fair with other methods, we pick the number of iterations that results in a total computation time of $15$ minutes. Finally, \emph{FGM} works on a smoothed variant of the original instance. We do not consider the smoothing parameter to be a hyperparameter and after some preliminary tests fix it equal to $10^{-8}$ for both datasets.

\subsection{Simple heuristics}
\subsubsection{Warm starts}
\label{subsec:warm-start}
For all methods, we set the first iterate to be the dual variables of balance constraints \eqref{constr:uc-balance} in the continuous relaxation of the UC problem.
This initialization strategy comes at a cheap cost and we have found it to reduce considerably the order of magnitude of the Lagrangian function compared to other strategies such as initialization at a fixed plausible price, say $\pi_1 = [40, 40, \dots, 40]$.
This initialization is interpretable: these prices are exactly convex hull prices in the case where we only have minimum up/down times and constant start-up/shut-down costs and the classic tight $3$-bin formulation available in that case is used, see \cite{hua2016convex}.
\subsubsection{Returning an average iterate} 
In addition to the best iterate encountered, we also compute the average of the $10 \%$ last iterates and, in case it gives a lower objective, return that average.
This heuristic is inexpensive (only one additional oracle call) and, in our benchmarks, we typically observe a decrease of the relative error in the order of $2 \times 10^{-6}$.
\subsection{Benchmark on real-world datasets}
\label{subsec:benchmark}
We evaluate all methods on the 24 described instances. We use a stopping criterion of $15$ minutes for all methods. Results are reported in \cref{fig:summary-plots} whose plots displays for each method the evolution of the objective function accuracy over time, averaged over all instances in each dataset.
\begin{figure*}[!ht]%
	\centering
	\subfloat{\includegraphics[width=1\textwidth, height=0.35\textwidth]{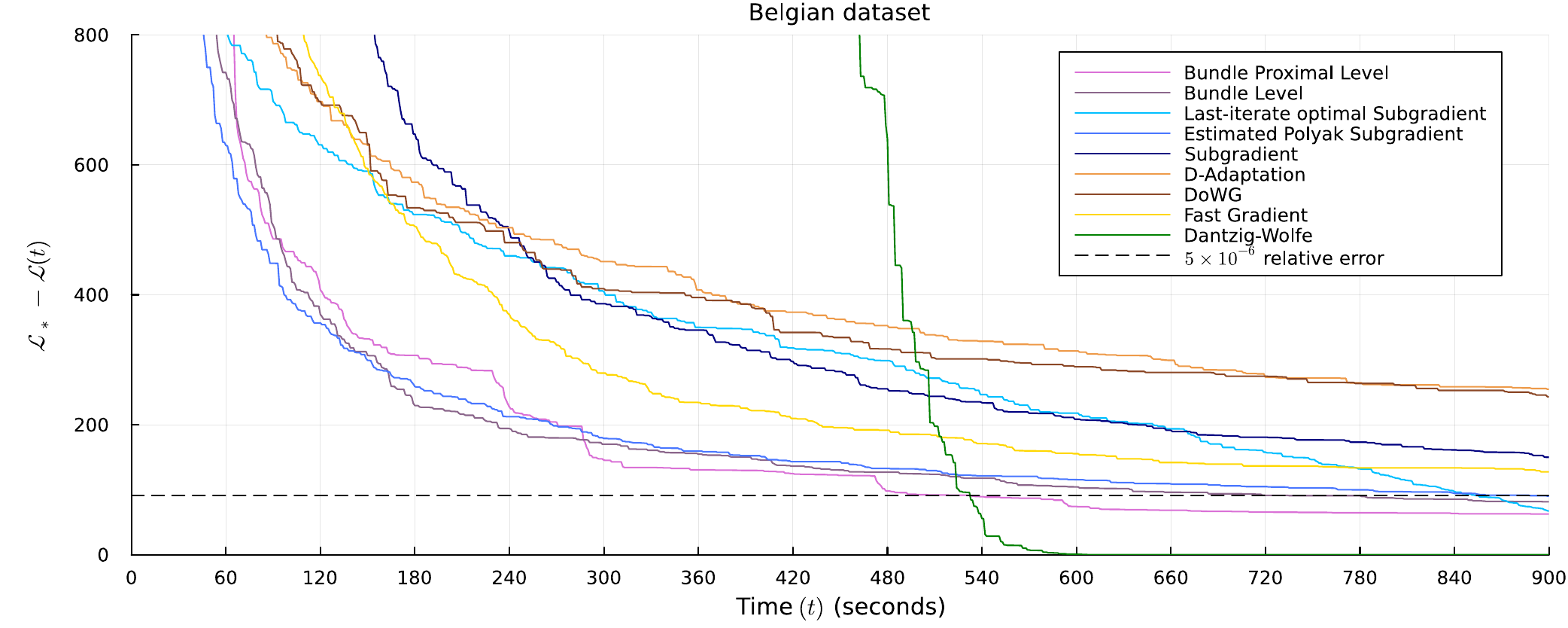}}%
	\qquad
	\subfloat{\includegraphics[width=1\textwidth, height=0.45\textwidth]{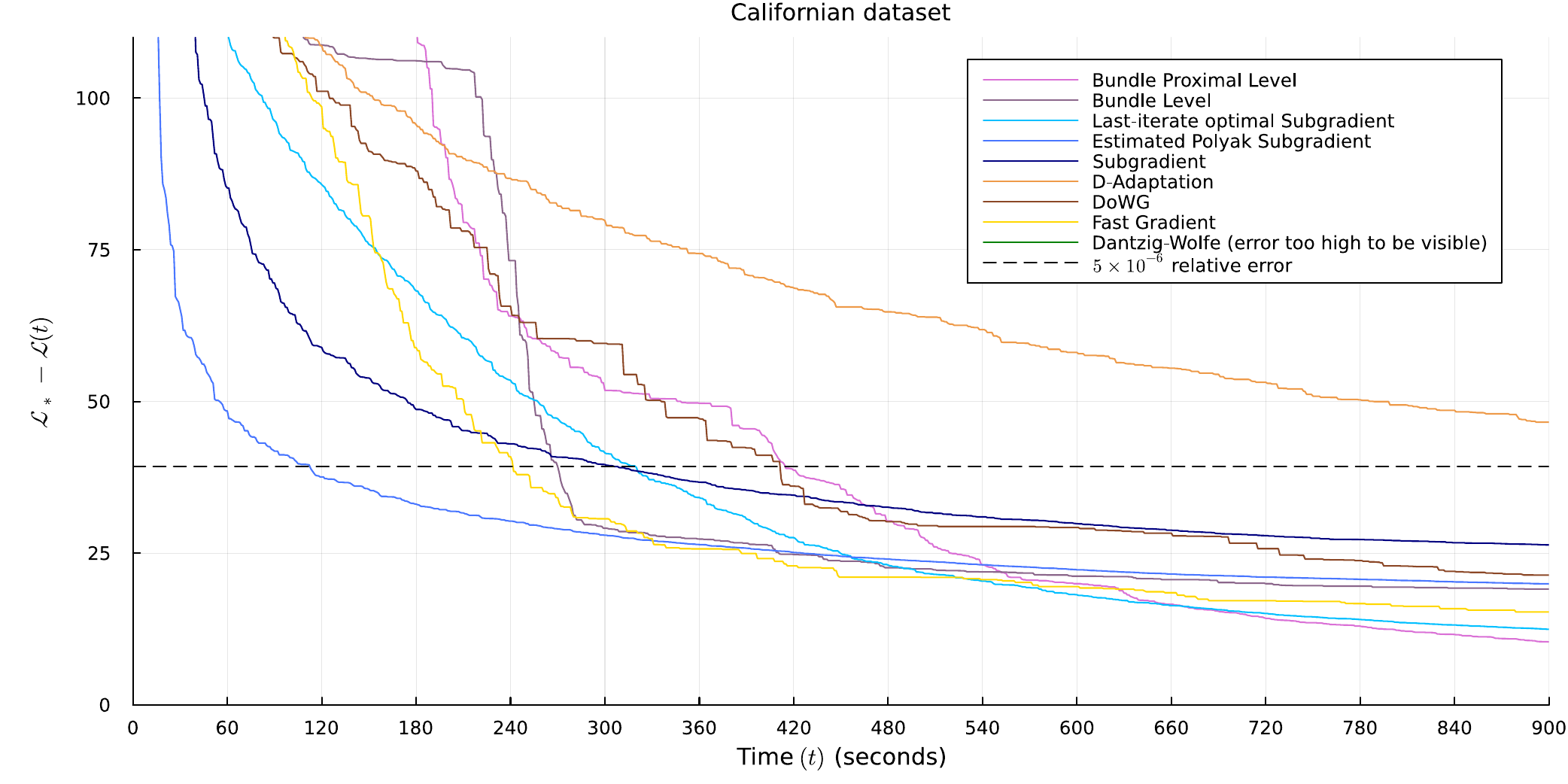}}%
	\caption{Plot of the best objective gap $\cL_* - \cL_{best}^k$ achieved so far versus time. For each dataset (multiple instances), we aggregate the individual plots per instance by smoothing them over a same time interval and taking their arithmetic mean.}%
	\label{fig:summary-plots}%
\end{figure*}

The primal method of reference (Column Generation with Dantzig-Wolfe decomposition) solves the CH pricing problem exactly in 10 minutes for the Belgian dataset but does not converge within the allocated budget for the Californian dataset. It is not surprising to see it does not scale to large instances (in terms of generators) as Column Generation is the primal equivalent of the Kelley cutting-plane method, known for its numerical instability. On the contrary, when adding level stabilization to cutting-plane methods (see Bundle Proximal Level and Bundle Level methods), we obtain competitive results in both datasets. Interestingly, and although seemingly rejected in previous works on the computation of CH prices, subgradient methods (especially the estimated Polyak stepsizes variant and the last-iterate optimal variant) allow direct and fast decrease of the objective function. Parameter-free methods are not competitive at all and the fast gradient method, although always in the leading group, is never outperforming bundle methods or efficient subgradient methods.

The presented hyperparameter selection strategy allows one tuning per dataset, meaning one tuning for eight instances in the case of the Belgian dataset and one tuning for 16 instances in the case of the Californian dataset. To evaluate the impact of this strategy, we show in \cref{tab:error-reached-summary} the final relative errors reached for all instances.

\begin{table}[H]
  \centering
  \caption{Comparison of all first order methods in terms of final relative error $\frac{\cL_{*} - \cL_{best}}{\cL_{*}}$ reached. Instances on which we did hyperparameter tuning have their results reported in italic.}
  \label{tab:error-reached-summary}
  \resizebox{\columnwidth}{!}{%
  \begin{tabular}{@{}cc|ccc|cc|c@{}}
	\toprule
  BPLM & BLM & SUBG-L & SUBG-EP & SUBG & DA & DOWG & FGM \\ \midrule
\multicolumn{8}{c}{dataset : Belgian} \\ \hline
\emph{4.1e-6} & \emph{2.6e-6} & \emph{\textbf{1.5e-6}} & \emph{2.2e-6} & \emph{6.2e-6} & \emph{8.2e-6} & \emph{1.0e-5} & \emph{8.7e-6} \\
\textbf{6.0e-6} & 9.7e-6 & 8.9e-6 & 1.2e-5 & 1.3e-5 & 2.2e-5 & 1.2e-5 & 7.4e-6 \\
2.6e-6 & 3.5e-6 & \textbf{1.1e-6} & 6.4e-6 & 8.9e-6 & 1.6e-5 & 1.2e-5 & 8.3e-6 \\
1.6e-6 & 1.7e-6 & \textbf{1.1e-6} & 2.2e-6 & 5.2e-6 & 8.5e-6 & 1.3e-5 & 5.8e-6 \\
2.1e-6 & 3.1e-6 & \textbf{9.8e-7} & 2.4e-6 & 6.0e-6 & 1.3e-5 & 1.6e-5 & 6.5e-6 \\
3.3e-6 & 3.5e-6 & 4.0e-6 & \textbf{2.6e-6} & 7.6e-6 & 1.2e-5 & 1.7e-5 & 6.6e-6 \\
\textbf{3.0e-6} & 3.6e-6 & 3.3e-6 & 4.9e-6 & 9.8e-6 & 2.4e-5 & 1.5e-5 & 6.8e-6 \\
\textbf{2.6e-6} & 4.2e-6 & 4.2e-6 & 3.5e-6 & 6.6e-6 & 9.3e-6 & 1.2e-5 & 5.4e-6 \\ \hline
\multicolumn{8}{c}{dataset : Californian} \\ \hline
\emph{1.5e-6} & \emph{3.2e-7} & \emph{1.1e-7} & \emph{\textbf{7.5e-8}} & \emph{3.7e-7} & \emph{7.9e-7} & \emph{1.4e-6} & \emph{1.6e-6} \\
1.5e-6 & 3.2e-7 & 8.7e-8 & \textbf{7.6e-8} & 3.7e-7 & 1.2e-6 & 1.4e-6 & 1.5e-6 \\
1.9e-6 & 3.2e-7 & 9.0e-8 & \textbf{6.7e-8} & 3.8e-7 & 1.6e-6 & 1.4e-6 & 1.6e-6 \\
1.5e-6 & 3.2e-7 & \textbf{5.3e-8} & 8.5e-8 & 3.7e-7 & 1.6e-6 & 1.4e-6 & 1.6e-6 \\
\textbf{2.4e-6} & 7.9e-6 & 3.8e-6 & 3.5e-6 & 4.5e-6 & 5.5e-6 & 5.1e-6 & 2.6e-6 \\
\textbf{8.8e-7} & 7.9e-6 & 3.9e-6 & 3.6e-6 & 4.3e-6 & 5.1e-6 & 4.8e-6 & 2.5e-6 \\
\textbf{1.1e-6} & 7.9e-6 & 3.7e-6 & 3.5e-6 & 4.2e-6 & 5.1e-6 & 4.1e-6 & 2.5e-6 \\
4.4e-6 & 7.9e-6 & 3.7e-6 & 3.5e-6 & 4.2e-6 & 5.1e-6 & 6.0e-6 & \textbf{2.3e-6} \\
3.6e-7 & 3.7e-7 & \textbf{1.4e-7} & 2.2e-7 & 6.4e-7 & 1.1e-6 & 1.5e-6 & 2.0e-6 \\
4.1e-7 & 3.7e-7 & \textbf{8.2e-8} & 2.4e-7 & 6.6e-7 & 9.8e-7 & 1.5e-6 & 1.9e-6 \\
3.3e-7 & 3.7e-7 & \textbf{9.0e-8} & 2.1e-7 & 6.4e-7 & 9.8e-7 & 1.2e-6 & 1.8e-6 \\
3.3e-7 & 3.7e-7 & \textbf{1.0e-7} & 2.3e-7 & 6.4e-7 & 9.8e-7 & 1.2e-6 & 2.0e-6 \\
\textbf{1.5e-6} & 2.8e-6 & 2.4e-6 & 5.0e-6 & 4.9e-6 & 1.0e-5 & 4.3e-6 & 2.1e-6 \\
\textbf{1.3e-6} & 2.5e-6 & 2.4e-6 & 4.7e-6 & 6.5e-6 & 1.0e-5 & 3.3e-6 & 2.0e-6 \\
\textbf{1.4e-6} & 2.5e-6 & 2.4e-6 & 4.7e-6 & 6.5e-6 & 1.2e-5 & 3.3e-6 & 2.3e-6 \\
\textbf{1.5e-6} & 2.4e-6 & 2.4e-6 & 4.7e-6 & 6.5e-6 & 1.3e-5 & 2.4e-6 & 2.1e-6 \\ \hline
\end{tabular}%
  }
\end{table}

We note that making one hyperparameter selection per dataset is sufficient as, for one given dataset and method, final relative errors are of the same magnitude for each instance, implying that those choices are relatively robust.

Following our observation in the aggregated plots that subgradient methods offer fast decrease of the objective function from the beginning (compared to other methods), we also show the relative error reached after $5$ minutes in \cref{tab:after-5mn}.
Subgradient methods become indeed the most competitive methods (or ranked second with a small margin behind BPLM for the Belgian dataset).
Although day-ahead markets allow 15 minutes of computation, this suggests the following hybrid scheme for a practical solver: a specific subgradient method (for which iterations are less expensive than bundle methods) may be run for the first 5 minutes and its iterates could then be fed to a bundle method which would run for the last 10 minutes.
\begin{table}[ht]
	\centering
	\caption{Comparison of all first order methods in terms of relative error $\frac{\cL_{*} - \cL_{best}}{\cL_{*}}$ reached after 5 minutes.}
    \label{tab:after-5mn}
	\resizebox{\columnwidth}{!}{%
  \begin{tabular}{@{}cc|ccc|cc|c@{}}
	\toprule
  BPLM & BLM & SUBG-L & SUBG-EP & SUBG & DA & DOWG & FGM \\ \midrule
  \multicolumn{8}{c}{dataset : Belgian} \\ \hline
  {\textbf{5.4e-6}} & {5.7e-6} & {{1.5e-5}} & {5.8e-6} & {1.5e-5} & {{1.6e-5}} & {2.0e-5} & {1.5e-5} \\
1.9e-5 & \textbf{1.6e-5} & {2.6e-5} & 1.9e-5 & 2.9e-5 & {3.2e-5} & 1.9e-5 & 2.3e-5 \\
\textbf{7.8e-6} & 1.1e-5 & {2.6e-5} & 1.5e-5 & 2.7e-5 & {2.9e-5} & 2.1e-5 & 1.8e-5 \\
\textbf{2.4e-6} & 6.0e-6 & {1.6e-5} & 5.2e-6 & 1.2e-5 & {1.7e-5} & 1.8e-5 & 7.4e-6 \\
\textbf{3.6e-6} & 6.5e-6 & {2.6e-5} & 9.0e-6 & 1.9e-5 & {2.5e-5} & 2.7e-5 & 1.3e-5 \\
\textbf{3.8e-6} & 6.0e-6 & {2.6e-5} & 5.1e-6 & 2.1e-5 & {2.7e-5} & 2.9e-5 & 1.1e-5 \\
1.6e-5 & 1.3e-5 & {2.2e-5} & \textbf{1.1e-5} & 2.6e-5 & {3.8e-5} & 2.7e-5 & 2.8e-5 \\
\textbf{4.0e-6} & 7.7e-6 & {1.7e-5} & 6.3e-6 & 2.0e-5 & {1.8e-5} & 2.1e-5 & 9.2e-6 \\ \hline
\multicolumn{8}{c}{dataset : Californian} \\ \hline
	{2.7e-6} & {4.6e-7} & {{1.1e-6}} & {\textbf{2.0e-7}} & {1.8e-6} & {{1.6e-6}} & {2.9e-6} & {1.6e-6} \\
	2.7e-6 & 4.6e-7 & {8.2e-7} & \textbf{3.0e-7} & 1.6e-6 & {5.4e-6} & 2.9e-6 & 1.5e-6 \\
	2.7e-6 & 4.6e-7 & {9.2e-7} & \textbf{1.8e-7} & 1.9e-6 & {6.6e-6} & 2.9e-6 & 2.3e-6 \\
	2.7e-6 & 4.6e-7 & {8.2e-7} & \textbf{2.1e-7} & 1.9e-6 & {6.6e-6} & 2.9e-6 & 2.3e-6 \\
	1.6e-5 & 1.0e-5 & {5.1e-6} & \textbf{4.0e-6} & 5.5e-6 & {8.9e-6} & 6.7e-6 & 7.2e-6 \\
	1.6e-5 & 1.0e-5 & {5.4e-6} & \textbf{3.9e-6} & 4.8e-6 & {7.3e-6} & 6.2e-6 & 6.0e-6 \\
	1.6e-5 & 1.0e-5 & {5.1e-6} & \textbf{4.0e-6} & 4.5e-6 & {6.8e-6} & 5.3e-6 & 6.0e-6 \\
	1.6e-5 & 1.0e-5 & {5.1e-6} & \textbf{4.0e-6} & 4.5e-6 & {7.3e-6} & 8.2e-6 & 6.3e-6 \\
	1.1e-6 & 5.0e-7 & {\textbf{4.5e-7}} & 5.4e-7 & 9.3e-7 & {2.3e-6} & 2.9e-6 & 2.0e-6 \\
	1.1e-6 & 5.0e-7 & {\textbf{4.5e-7}} & 5.2e-7 & 8.8e-7 & {2.1e-6} & 2.9e-6 & 2.1e-6 \\
	9.7e-7 & 5.0e-7 & {\textbf{4.5e-7}} & 5.3e-7 & 8.8e-7 & {2.1e-6} & 2.9e-6 & 1.8e-6 \\
	8.3e-7 & 5.0e-7 & {5.0e-7} & \textbf{4.7e-7} & 9.3e-7 & {2.3e-6} & 2.9e-6 & 2.0e-6 \\
	9.9e-6 & 6.4e-6 & {1.0e-5} & 7.5e-6 & 6.5e-6 & {1.8e-5} & 1.7e-5 & \textbf{4.0e-6} \\
	6.9e-6 & \textbf{3.9e-6} & {1.0e-5} & 6.7e-6 & 9.9e-6 & {1.7e-5} & 1.5e-5 & 4.0e-6 \\
	7.2e-6 & \textbf{4.3e-6} & {1.0e-5} & 6.7e-6 & 9.9e-6 & {1.7e-5} & 1.5e-5 & 7.2e-6 \\
	7.2e-6 & \textbf{3.9e-6} & {1.0e-5} & 6.7e-6 & 9.9e-6 & {1.9e-5} & 5.6e-6 & 5.3e-6 \\ \hline
  \end{tabular}%
	}
  \end{table}

\begin{table}[ht]
    \centering
    \caption{Comparison of all first order methods in terms of time to reach a $5\times 10^{-6}$ relative error. X represents failure of the method to reach such accuracy in the 15 minutes time budget. Geometric mean time to reach the relative error threshold is on the last row reported in italic.}
    \label{tab:time-reach}
    \resizebox{\columnwidth}{!}{%
    \begin{tabular}{@{}cc|ccc|cc|c@{}}
    \toprule
    BPLM & BLM & SUBG-L & SUBG-EP & SUBG & DA & DOWG & FGM \\ \midrule
	\multicolumn{8}{c}{dataset : Belgian} \\ \hline
	{899} & {X} & {\textbf{776}} & {793} & {X} & {X} & {X} & {X} \\
	\textbf{900} & X & \textbf{900} & X & X & X & X & X \\
	559 & \textbf{549} & 830 & 900 & X & X & X & X \\
	\textbf{160} & 760 & 810 & 737 & X & X & X & X \\
	\textbf{263} & 883 & 828 & 773 & X & X & X & X \\
	820 & 782 & 892 & \textbf{493} & X & X & X & X \\
	\textbf{614} & X & 868 & X & X & X & X & X \\
	\textbf{302} & 656 & 891 & 760 & X & X & X & X \\ 
    \midrule \textit{482} & X & \textit{848} & X & X & X & X & X \\
    \midrule \multicolumn{8}{c}{dataset : Californian} \\ \hline
	{258} & {74.6} & {179} & {\textbf{20.9}} & {126} & {141} & {201} & {152} \\
	385 & 74.2 & 169 & \textbf{21.1} & 117 & 317 & 219 & 142 \\
	153 & 74.4 & 170 & \textbf{20.9} & 119 & 464 & 207 & 165 \\
	198 & 74.2 & 166 & \textbf{20.9} & 128 & 459 & 212 & 221 \\
	738 & X & 479 & \textbf{112} & 444 & X & X & 742 \\
	563 & X & 590 & \textbf{140} & 289 & X & X & 641 \\
	561 & X & 516 & \textbf{112} & 131 & X & X & 628 \\
	864 & X & 521 & 140 & \textbf{130} & X & X & 600 \\
	185 & 122 & 64.3 & 39.7 & \textbf{18.7} & 92.8 & 241 & 93.0 \\
	196 & 123 & 65.9 & 38.9 & \textbf{25.8} & 149 & 258 & 97.0 \\
	161 & 122 & 49.7 & 39.6 & \textbf{25.5} & 149 & 192 & 93.8 \\
	167 & 124 & 77.9 & 39.2 & \textbf{19.1} & 152 & 194 & 93.9 \\
	534 & 438 & 493 & \textbf{140} & 783 & X & 891 & 270 \\
	409 & 313 & 494 & 689 & X & X & X & \textbf{271} \\
	452 & 311 & 479 & 676 & X & X & X & 385 \\
	482 & \textbf{260} & 486 & 677 & X & X & 406 & 329 \\
    \midrule \textit{337} & X & \textit{230} & \textit{83} & X & X & X & \textit{275} \\
    \bottomrule
	\end{tabular}%
    }
\end{table}

\cref{tab:time-reach} shows the time each method required to reach a $5\times 10^{-6}$ relative error threshold and reports the geometric mean time for methods that consistently meet this threshold.
In the Belgian dataset, only BPLM and SUBG-L reached the threshold on all instances, with BPLM operating on average 75\% faster than SUBG-L.
In the Californian dataset, BPLM, SUBG-L, FGM, and SUBG-EP all met the threshold, with SUBG-EP achieving it nearly three times faster than the second fastest method.
\cref{fig:score-plots} summarizes each method's behavior: the x-axis displays the mean time to reach the error threshold (from  \cref{tab:time-reach}) and the y-axis shows the mean final relative error (from \cref{tab:error-reached-summary}).
 Methods that exceeded the 15 minutes time budget or failed to achieve acceptable error levels are marked with distinct indicators, as detailed in the figure caption. 
 Our analysis shows no clear winner among optimization methods.
 However, several front-runners are identified: the bundle level methods (both proximal and non-proximal variants) and two subgradient methods (those using estimated Polyak stepsizes and the last-iterate optimal variant).

\begin{figure}[ht]
    \centering
    \includegraphics[width=0.45\textwidth]{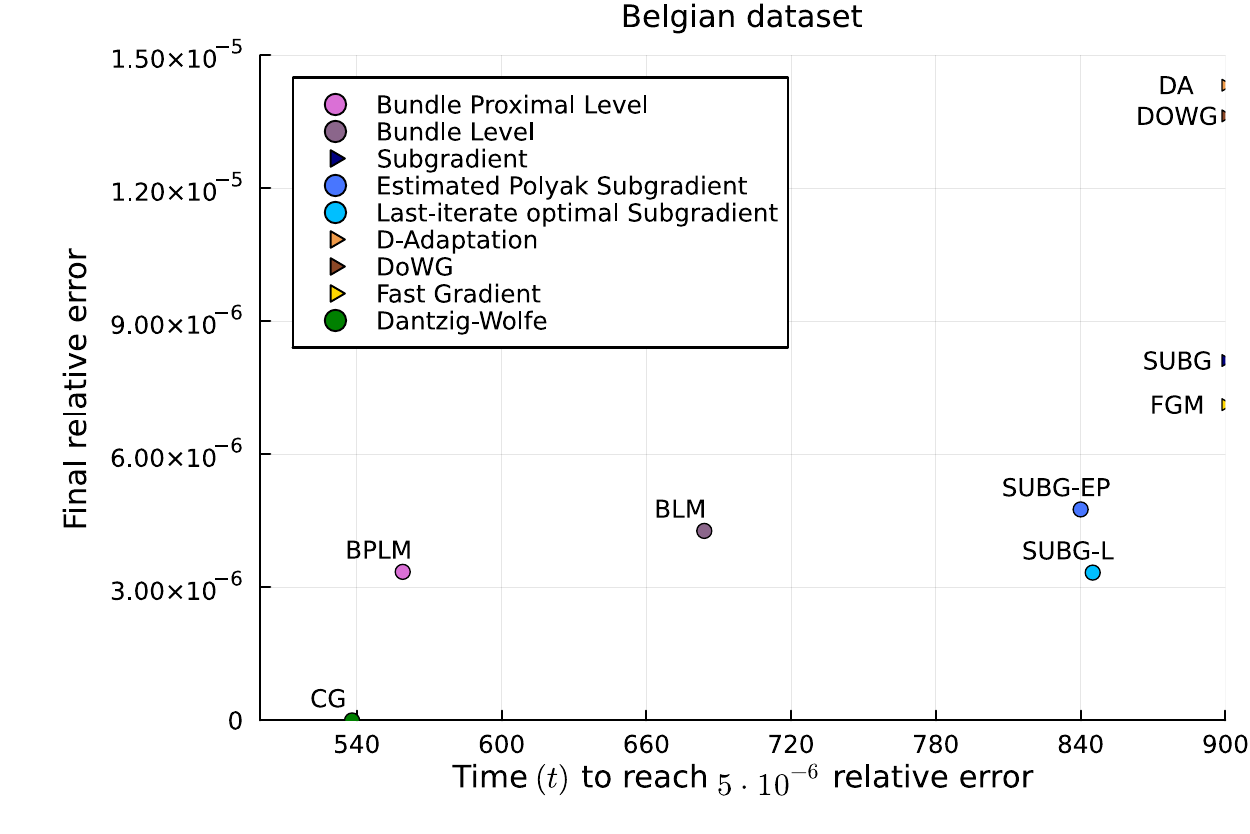}

    \includegraphics[width=0.45\textwidth]{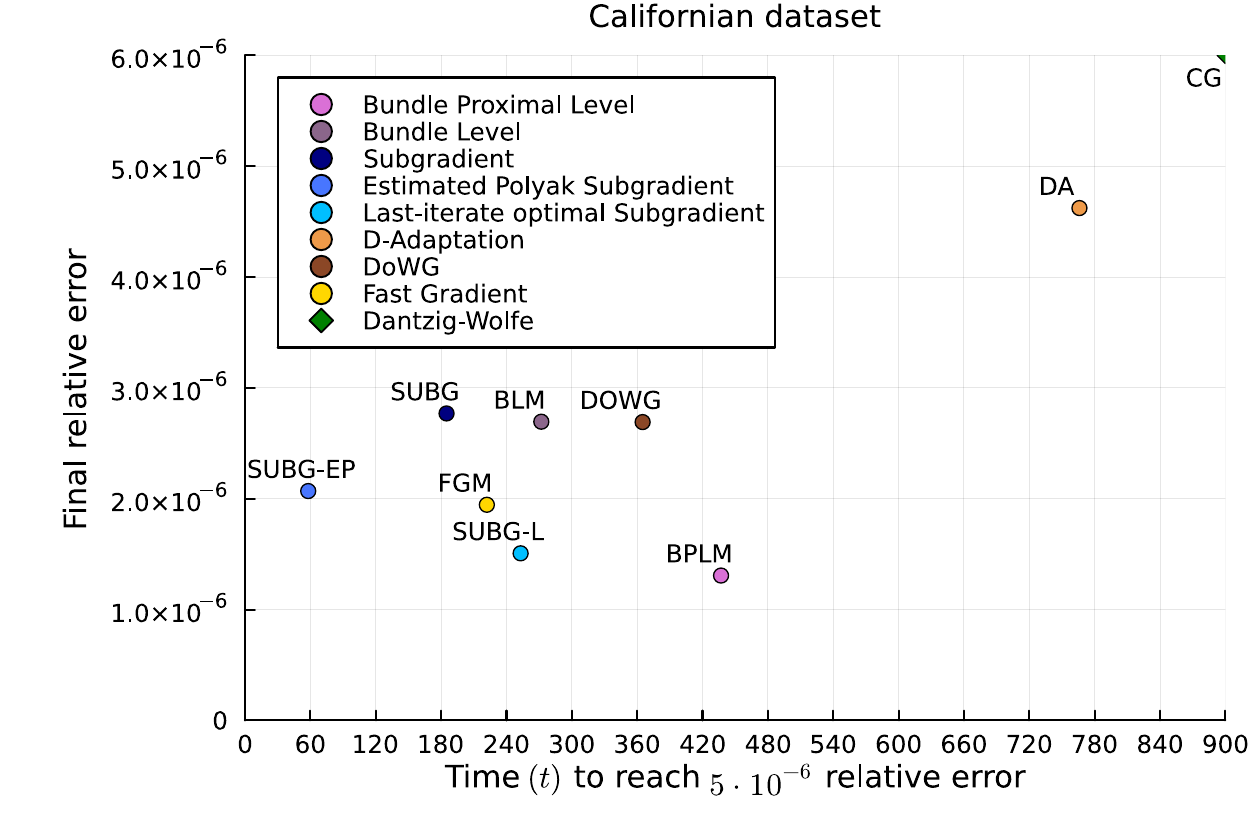}

    \includegraphics[width=0.45\textwidth]{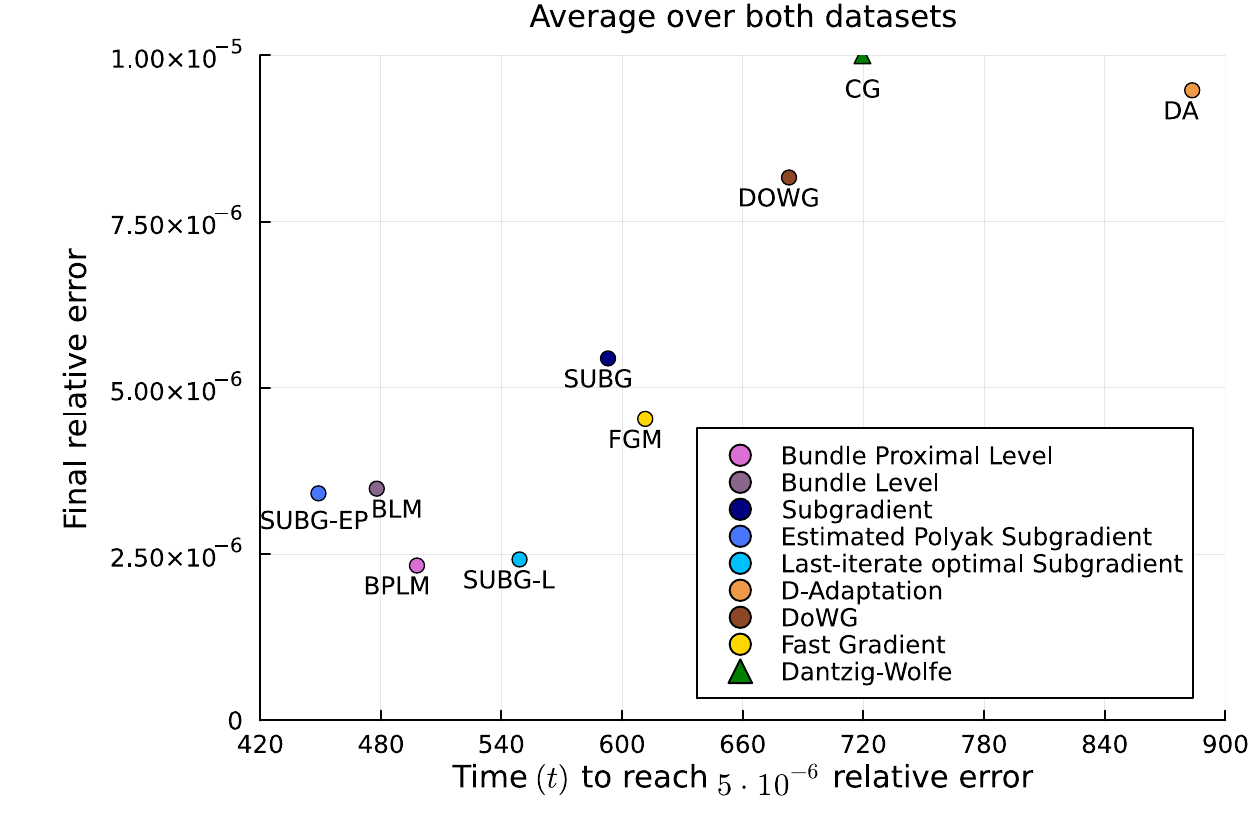}
    \caption{Visual comparison of all methods on all instances. Methods in the bottom-left corner are the most competitive. Each method is associated with its mean time to reach $5 \times 10^{-6}$ relative error and mean relative error across all instances. Methods stopped after 900 seconds are penalized to 1000 seconds. All methods that should be outside the plot are represented either as a right pointing triangle if they exceeded the time budget or a up pointing triangle if their relative error is too big. A diamond marker means that the method exhibits both large relative error and exceeded budget.}
    \label{fig:score-plots}
\end{figure}

\section{Conclusion} \label{sec:conclusion}
In this paper, we proposed to compute CH prices using a large set of first-order methods applied to the dual formulations.
All proposed methods are novel in the context of CH pricing except the Bundle Level Method (BLM) and the subgradient method (SUBG).
We also propose to consider a smooth approximation of the dual CH pricing problem and use FGM, an accelerated method to solve it. 
To conduct a computational study of all presented algorithms, we propose the \texttt{ConvexHullPricing.jl} package and use it on 24 large-scale real-world instances. 
It appears that subgradient methods, although apparently neglected by the community, are particulary competitive when used with both good initialization and stepsize schedules (see the performance of SUBG-EP and SUBG-L). 
We also notice the competitiveness of BPLM on large instances. This result is consistent with the findings in \cite{stevens2022application} where authors noticed competitiveness of BLM in a similar setting. 
Both methods exhibit similar behaviors with BPLM usually yielding lower relative error.
Parameter-free methods require a sufficiently large number of steps before reaching their asymptotically optimal convergence regime. However, this number appears to be too large given the tested 15-minute time constraint, making these methods non-competitive.
Finally, using FGM on the smooth approximation shows promising results on the Californian dataset. 
Primal methods, although not covered in our extensive comparative numerical experiments, are of their own interest. 
Preliminary results show that Dantzig-Wolfe method is competitive in market instances considering a small number of producers but suffer from numerical instability when this number grows.
Further analysis of how decomposition methods applied to Extended Formulations could be leveraged (as discussed for instance in \cite{sadykov2013column} and in relation to works presented in \cite{yu2020extended, paquetcomparison}) is an interesting venue for further research.
\section*{Acknowledgments}
This work has been supported by the ITN-ETN project TraDE-OPT funded by the EU’s Horizon 2020 research and innovation programme under the Marie Sklodowska-Curie grant agreement No 861137. The first author was also partially funded by the UCLouvain \emph{Fonds spéciaux de Recherche} (FSR).

\bibliographystyle{ieeetr}
\bibliography{references}

\section{Biography Section}
\vskip -2.5 \baselineskip plus -1fil
\begin{IEEEbiographynophoto}{Sofiane Tanji}
received his B.Eng and M.Sc. degrees in mathematical engineering from Université Grenoble-Alpes and Grenoble INP Ensimag in France. Since 2021, he has been a Ph.D. candidate in applied mathematics with the Institute of Information and Communication Technologies, UCLouvain.
\end{IEEEbiographynophoto}
\vskip -2.5\baselineskip plus -1fil
\begin{IEEEbiographynophoto}{Yassine Kamri}
received his B.Eng degree in mathematical engineering from CentraleSupélec and a dual M.Sc. degree in applied mathematics from CentraleSupélec and ENS Paris-Saclay in France.
Since 2021, he has been a Ph.D. candidate in applied mathematics with the Institute of Information and Communication Technologies, UCLouvain.
\end{IEEEbiographynophoto}
\vskip -2.5\baselineskip plus -1fil
\begin{IEEEbiographynophoto}{Mehdi Madani}
Mehdi Madani earned a master degree in theoretical mathematics and a master degree in financial risk management from the Université de Liège respectively in 2009 and 2013, followed by a PhD in operations research from the Université catholique de Louvain in 2017. After a postdoctoral research stay at the Johns Hopkins University in 2017-2018, he joined N-SIDE in October 2018 where he is a Senior Consultant in electricity market design and mathematical optimization. He is mainly involved in the R\&D activities, for instance as sponsor and expert contributor to the Euphemia Lab, the pan-European Single Day-ahead Market Coupling Algorithm.
\end{IEEEbiographynophoto}
\vskip -2.5\baselineskip plus -1fil
\begin{IEEEbiographynophoto}{François Glineur}
	François Glineur earned dual engineering degrees from CentraleSupélec and Université de Mons in 1997, and a PhD in Applied Sciences from the latter institution in 2001. After visiting Delft University of Technology and working as a post-doctoral researcher at McMaster University, he joined Université catholique de Louvain, where he is currently a full professor of applied mathematics in the Engineering School, and member of the Institute of Information and Communication Technologies, Electronics and Applied Mathematics. He served as the Engineering School's vice-dean between 2016 and 2020. His primary areas of interest in research are optimization models and methods, with a focus on convex optimization and algorithmic efficiency. He received the 2017 Optimization Letters best paper award for the worst-case analysis of gradient descent with line search. He is also interested in the use of optimization in engineering, as well as nonnegative matrix factorization and its application to data analysis.
\end{IEEEbiographynophoto}

\vfill

\end{document}